\newcommand{\E}{\mathbb{E}}
\numberwithin{equation}{section}
\theoremstyle{plain}
\newtheorem{thm}{Theorem}[section]
\newtheorem{lemma}[thm]{Lemma}
\theoremstyle{definition}
\newtheorem{definition}[thm]{Definition}
\newtheorem{remark}[thm]{Remark}
\theoremstyle{remark}
\newtheorem*{stat*}{Statement}
\newtheorem*{prop*}{Proposition}
\numberwithin{equation}{section}
\newcommand{\Z}{\mathbb{Z}}
\newcommand{\R}{\mathbb{R}}
\newcommand{\var}{\text{Var}}
\def\chaptermark#1{}
\def\chapter{%
  \if@openright\cleardoublepage\else\clearpage\fi
  \thispagestyle{plain}\global\@topnum\z@
  \@afterindenttrue \secdef\@chapter\@schapter}
\def\@chapter[#1]#2{\refstepcounter{chapter}%
  \ifnum\c@secnumdepth<\z@ \let\@secnumber\@empty
  \else \let\@secnumber\thechapter \fi
  \typeout{\chaptername\space\@secnumber}%
  \def\@toclevel{0}%
  \ifx\chaptername\appendixname \@tocwriteb\tocappendix{chapter}{#2}%
  \else \@tocwriteb\tocchapter{chapter}{#2}\fi
  \chaptermark{#1}%
  \addtocontents{lof}{\protect\addvspace{10\p@}}%
  \addtocontents{lot}{\protect\addvspace{10\p@}}%
  \@makechapterhead{#2}\@afterheading}
\def\@schapter#1{\typeout{#1}%
  \let\@secnumber\@empty
  \def\@toclevel{0}%
  \ifx\chaptername\appendixname \@tocwriteb\tocappendix{chapter}{#1}%
  \else \@tocwriteb\tocchapter{chapter}{#1}\fi
  \chaptermark{#1}%
  \addtocontents{lof}{\protect\addvspace{10\p@}}%
  \addtocontents{lot}{\protect\addvspace{10\p@}}%
  \@makeschapterhead{#1}\@afterheading}
\newcommand\chaptername{Chapter}
\def\@makechapterhead#1{\global\topskip 7.5pc\relax
  \begingroup
  \fontsize{\@xivpt}{18}\bfseries\centering
    \ifnum\c@secnumdepth>\m@ne
      \leavevmode \hskip-\leftskip
      \rlap{\vbox to\z@{\vss
          \centerline{\normalsize\mdseries
              \uppercase\@xp{\chaptername}\enspace\thechapter}
          \vskip 3pc}}\hskip\leftskip\fi
     #1\par \endgroup
  \skip@34\p@ \advance\skip@-\normalbaselineskip
  \vskip\skip@ }
\def\@makeschapterhead#1{\global\topskip 7.5pc\relax
  \begingroup
  \fontsize{\@xivpt}{18}\bfseries\centering
  #1\par \endgroup
  \skip@34\p@ \advance\skip@-\normalbaselineskip
  \vskip\skip@ }
\def\appendix{\par
  \c@chapter\z@ \c@section\z@
  \let\chaptername\appendixname
  \def\thechapter{\@Alph\c@chapter}}
\newcounter{chapter}
\newif\if@openright
\renewcommand{\tocsection}[3]{%
  \indentlabel{\@ifnotempty{#2}{\bfseries\ignorespaces#1 #2\quad}}\bfseries#3} 
\renewcommand{\tocsubsection}[3]{%
  \indentlabel{\@ifnotempty{#2}{\ignorespaces#1 #2\quad}}#3}
\newcommand\@dotsep{4.5}
\def\@tocline#1#2#3#4#5#6#7{\relax
  \ifnum #1>\c@tocdepth 
  \else
    \par \addpenalty\@secpenalty\addvspace{#2}%
    \begingroup \hyphenpenalty\@M
    \@ifempty{#4}{%
      \@tempdima\csname r@tocindent\number#1\endcsname\relax
    }{%
      \@tempdima#4\relax
    }%
    \parindent\z@ \leftskip#3\relax \advance\leftskip\@tempdima\relax
    \rightskip\@pnumwidth plus1em \parfillskip-\@pnumwidth
    #5\leavevmode\hskip-\@tempdima{#6}\nobreak
    \leaders\hbox{$\m@th\mkern \@dotsep mu\hbox{.}\mkern \@dotsep mu$}\hfill
    \nobreak
    \hbox to\@pnumwidth{\@tocpagenum{\ifnum#1=1\bfseries\fi#7}}\par
    \nobreak
    \endgroup
  \fi}
\renewcommand\csname r@tocindent0\endcsname{0pt}
\def\l@subsection{\@tocline{2}{0pt}{2.5pc}{5pc}{}}
\begin{document}

\title{Gaussian Fluctuation for Smoothed Local Correlations in CUE}

\author[A. Soshnikov]{Alexander Soshnikov}

\address{University of California at Davis \\ Department of Mathematics \\ 1 Shields Avenue \\  Davis CA 95616 \\ United States of America} 

\email{soshniko@math.ucdavis.edu}

 \subjclass[2010]{Primary: 60F05. }

 \keywords{Random Matrices, Central Limit Theorem}

\begin{abstract}

Motivated by the Rudnick-Sarnak theorem we study limiting distribution of 
smoothed local correlations of the form 
$$ \sum_{j_1, j_2, \ldots, j_n} f(N\*(\theta_{j_2}-\theta_{j_1}), N\*(\theta_{j_3}-\theta_{j_1}), \ldots, N\*(\theta_{j_n}-\theta_{j_1}))$$
for the Circular United Ensemble of 
random matrices for sufficiently smooth test functions.
\end{abstract}
\maketitle


\section{Introduction and Formulation of the Main Result.}

        The Circular $\beta$-Ensemble (C$\beta$E) 
is determined by the joint probability density
            \begin{align} \label{betaensemble}
                p_N^\beta(\theta_1, \ldots, \theta_N)=\frac{1}{Z_{N,\beta}}\prod_{1\leq j< k\leq N}\left|e^{i\theta_j}-e^{i\theta_k}\right|^\beta, \ \ \  \end{align}
where $\theta=(\theta_1, \ldots, \theta_N) \in  \mathbb{T}^N, $ i.e. $0\leq \theta_1, \ldots, \theta_N <2\*\pi, \ \beta>0, $ and  the partition function $Z_{N,\beta}$ is given by
            \begin{align}
\label{selberg}
                Z_{N,\beta}=\int_{\mathbb{T}^N} \prod_{1\leq j< k\leq N}\left|e^{i\theta_j}-e^{i\theta_k}\right|^\beta \*d\theta= 
(2\pi)^N \* \frac{\Gamma\left(1+\frac{\beta N}{2}\right)}{\Gamma\left(1+ \frac{\beta}{2}\right)^N}.
       \end{align}

The ensemble was introduced in Random Matrix Theory by Dyson in \cite{Dyson}. 
The Circular Unitary Ensemble (CUE) corresponds to $\beta=2.$
In this special case
(\ref{betaensemble}) describes the joint distribution of the eigenvalues of an $N\times N$ random unitary matrix U distributed according to the Haar measure.

In \cite{pairs}-\cite{AS2}, we studied the limiting distribution of pair counting functions
\begin{align}
\label{pairs}
                S_N(f)=\sum_{1\leq i\neq j\leq N} f(L_N\*(\theta_i-\theta_j)_c),
            \end{align}
in the Circular $\beta$-Ensemble, where $1\leq L_N\leq N$
and
$(\theta_i-\theta_j)_c$ is the phase difference on the unit circle, i.e.
\begin{align}
\label{circlediff}
(x-y)_c= \begin{cases}  x-y&\text{if }  -\pi\leq x-y<\pi,\\
 x-y -2\*\pi  &\text{if  }  \pi\leq x-y<2\*\pi,\\
x-y +2\*\pi&\text{if}  -2\*\pi<x-y<-\pi.
\end{cases}
\end{align}

The case $\beta=2, \ L_N=N$ was motivated by
a classical result of Montgomery on pair correlation of zeros of the Riemann zeta function \cite{montgomery1}-\cite{montgomery2}. Let us denote the ``non-trivial'' zeros as $\{ 1/2 \pm i\* \gamma_k, \ \gamma_k>0\}.$   Montgomery's theorem suggested that the rescaled zeros
\[ 
\tilde{\gamma}_k= \frac{\gamma_k}{2\*\pi} \*\log(\gamma_k),
\]
asymptotically behave as the eigenvalues of a large random unitary (CUE) matrix. 
Namely, Montgomery studied the asymptotic behavior of the the statistic
\[
F_T(\alpha)=T^{-1}\* \sum_{0<\tilde{\gamma}_j, \tilde{\gamma_k}\leq T} \exp(i\*\alpha\*(\tilde{\gamma}_j-\tilde{\gamma}_k)) 
\*\frac{4}{4+(\tilde{\gamma}_j-\tilde{\gamma}_k)^2/\log(T)^2},
\]
for real $\alpha$ and large real $T.$ 
Assuming the Riemann Hypothesis, Montgomery argued that
\begin{align}
\label{montform}
F(\alpha):=\lim_{T\to\infty} F_T(\alpha)=\begin{cases}  |\alpha|, &\text{ if }  \   0<|\alpha|<1    \\
                                                        1 , &\text{if} \ |\alpha|\geq 1.
                    \end{cases}. 
\end{align}
Montgomery rigorously proved (\ref{montform}) for $0<|\alpha|<1$ and provided heuristic arguments to support the formula for $|\alpha|\geq 1.$ He also proved that for 
$ \alpha=0 $ and large $T$ the statistic behaves as
\[ F_T(0)=\log(T)^{-1} \*(1+o(1)).\]

Since $\alpha \mapsto \delta(\alpha)+\min(|\alpha|, 1)$ is the Fourier transform of $\delta(x)+ 1-\left(\frac{\sin(\pi\*x)}{\pi\*x}\right)^2,$ the formula (\ref{montform}) implies that
the asymptotic behavior of the two-point correlations of the rescaled zeros of the Riemann zeta functions and the eigenvalues of a large random unitary matrix coincide in the limit.  

In 1994, Hejhal (\cite{hejhal}) extended the result to the case of three-point correlation functions, again under a technical condition on the support of the Fourier transform of a test function.
In 1996, Rudnick and Sarnak (\cite{RS}) considered the $l$-level correlation sum, $l\geq 2,$
\begin{align}
\label{rudsar1}
R_{T,l}(g)=\sum^{*} g(\tilde{\gamma}_{j_1}, \ldots, \tilde{\gamma}_{j_l}),
\end{align}
where the sum is restricted to distinct rescaled zeros in the interval $[0,T]$ for large $T,$
and $g$ is a smooth test function satisfying:

(i) $g(x_1, x_2, \ldots, x_l)$ is symmetric.

(ii) $g(x_1+t, x_2+t, \ldots, x_l+t)=g(x_1, x_2, \ldots, x_l)$ for all $t\in \R.$

(iii) $g(x_1, x_2, \ldots, x_l)\to 0$ rapidly as $|x|\to\infty$ in the hyperplane $\sum x_j=0.$

Assuming the Riemann Hypothesis and an additional important technical condition that $\hat{g}(\xi_1, \ldots, \xi_l)$, the Fourier transform of $g$, is supported in $\sum_j |\xi_j|<2,$ they proved that

\begin{align}
\label{rudsar2}    
 R_{T,l}(g) \to \int_{\R^l} g(x_1, \ldots, x_l)\*\rho_l(x_1, \ldots, x_l) \* \delta(\frac{x_1+\ldots+x_l}{l}) \* dx_1\cdots dx_l, 
\end{align}
where $\delta(x)$ is the delta function and 
\begin{align}
\label{rudsar3}   
\rho_l(x_1, \ldots, x_l)=det(K(x_i,x_j))_{1\leq i,j\leq l}, \ \ K(x,y)=\frac{sin(\pi\*(x-y))}{\pi(x-y)},
\end{align}
is the limiting $l$-point correlation function for the CUE. We refer the reader to \cite{conrey}, \cite{MS}, \cite{keating}, \cite{CNN} and the references therein for additional information on exciting connections between Random Matrices and Number Theory.

Introducing $f(y_1, \ldots, y_{n})=g(0, y_1, \ldots, y_{l-1}), \ n=l-1, \ $ we can rewrite (\ref{rudsar1}) as
\begin{align}
\label{rudsar4}
R_{T,l}(g)=\sum^{*} f(\tilde{\gamma}_{j_2}-\tilde{\gamma}_{j_1}, \ldots, \tilde{\gamma}_{j_l}-\tilde{\gamma}_{j_1}),
\end{align}
where $f$ is a smooth symmetric test function decaying fast at infinity and the sum is over all
distinct rescaled zeros in the interval $[0,T].$
In this paper, we study the limiting fluctuation of the analogue of (\ref{rudsar4}) for the CUE.
Specifically, we consider
\begin{align}
\label{kfoldstat}
S_N(f)=\sum_{1\leq j_1, j_2, \ldots, j_{n+1}\leq N} f(N\*(\theta_{j_2}-\theta_{j_1})_c, \ldots, N\*(\theta_{j_{n+1}}-\theta_{j_1})_c),
\end{align}
where $\theta=(\theta_1, \ldots, \theta_N) \in  \mathbb{T}^N$ comes from the CUE and $f\in C^{\infty}_c(\R^{n})$ is a smooth test function with compact support.
Even though the number of terms in (\ref{kfoldstat}) is $N^{n+1},$ the number of non-zero terms in the sum is of order $N.$

When $n=1$ the following result was proven in \cite{pairs}:

\begin{thm} (\cite{pairs}). 

Let $f \in C^{\infty}_c(\R)$ be an even, smooth, compactly supported function on the real line and $\theta=(\theta_1, \ldots, \theta_N) \in  \mathbb{T}^N$ be CUE-distributed. Consider
\begin{align}
\label{localcase}
S_N(f(N\cdot))=\sum_{1\leq i,j\leq N} f(N\*(\theta_i-\theta_j)_c).
\end{align}
Then 
\[\E S_N(f)= \sum_{k\in \mathbb{Z}} \frac{1}{\sqrt{2\pi}}\hat{f}(k/N) \*\min\left(\frac{|k|}{N}, 1\right) + \hat{f}(0)\*N, 
\]
and
$(S_N(f(N\cdot)) -\E S_N(f(N\cdot)))\*N^{-1/2}$ converges in distribution to the centered real Gaussian random variable with the variance
\begin{align}
\label{vvvv}
\frac{1}{\pi}\* \int_{\mathbb{R}} |\hat{f}(t)|^2 \*\min(|t|,1)^2\* dt -\frac{1}{\pi}\* \int_{|s-t|\leq 1, |s|\vee|t|\geq 1} \hat{f}(t)\*\hat{f}(s)\*
(1-|s-t|)\* ds\*dt\\
-\frac{1}{\pi}\* \int_{0\leq s,t\leq 1, s+t>1} \hat{f}(s)\*\hat{f}(t)\*(s+t-1) \*ds\*dt. \nonumber
\end{align}
\end{thm}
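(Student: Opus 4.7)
My plan is to diagonalize $S_N(f)$ in the Fourier basis of the circle and reduce everything to the joint moment structure of the secular coefficients $Z_k := \Tr U^k = \sum_j e^{ik\theta_j}$, which is explicitly known for CUE. Since $f \in C_c^\infty(\mathbb{R})$, for $N$ large the periodization $g_N(\theta) := f(N\theta)$ on $\mathbb{T}$ has Fourier coefficients $\hat g_N(k) = \hat f(k/N)/(\sqrt{2\pi}\,N)$, and
\[
S_N(f) \;=\; \sum_{i,j = 1}^N g_N(\theta_i - \theta_j) \;=\; \sum_{k \in \mathbb{Z}} \hat g_N(k)\, |Z_k|^2.
\]
The mean then follows from $|Z_0|^2 \equiv N^2$ together with the classical identity $\E|Z_k|^2 = \min(|k|, N)$ for $k \ne 0$, a direct consequence of the Dirichlet-kernel form of the CUE correlation kernel $K_N$; the separate $\hat f(0)\,N$ summand is precisely the $k = 0$ contribution.

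For the variance of the centered statistic $\sum_k \hat g_N(k)\, W_k$ with $W_k := |Z_k|^2 - \E|Z_k|^2$, I compute $\cov(|Z_k|^2, |Z_l|^2)$ using the Diaconis--Shahshahani identity
\[
\E\, \prod_{j \ge 1} (\Tr U^j)^{a_j}(\Tr U^{-j})^{b_j} \;=\; \delta_{a,b}\, \prod_{j \ge 1} j^{a_j}\, a_j!,\qquad \text{whenever } \sum_j j\, a_j \le N.
\]
Applied to $|Z_k|^2 |Z_l|^2 = Z_k Z_{-k} Z_l Z_{-l}$, in the ``bulk'' regime $k + l \le N$ (with $k, l > 0$) this forces $\cov = k^2\delta_{k,l}$, which under the scaling $k = Ns$ yields the Riemann-sum limit $\frac{1}{\pi}\int |\hat f(t)|^2 t^2\, dt$ over $|t|\le 1$, i.e., the bulk part of the first term of (\ref{vvvv}). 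When $\max(k,l) > N$ or $k + l > N$, the identity breaks down and one must expand $\E\, Z_k Z_{-k} Z_l Z_{-l}$ directly via the four-point correlation $\rho_4 = \det(K_N(\theta_i,\theta_j))_{1 \le i, j \le 4}$. After rescaling, the two boundary regions $\{|s - t| \le 1,\ |s| \vee |t| \ge 1\}$ and $\{0 \le s, t \le 1,\ s + t > 1\}$ produce the two off-diagonal integrals with weights $(1 - |s - t|)$ and $(s + t - 1)$, and also generate the $\min(|t|,1)^2 \equiv 1$ portion of the first integral for $|t| > 1$.

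To upgrade the second-moment calculation to a CLT, I use the method of cumulants. The $m$-th cumulant of $N^{-1/2}\sum_k \hat g_N(k)\, W_k$ is a multilinear combination of joint cumulants $\kappa_m(|Z_{k_1}|^2, \ldots, |Z_{k_m}|^2)$. A higher-order extension of Diaconis--Shahshahani (equivalently, by Weingarten calculus or by a direct expansion of $\rho_{2m}$) shows that these joint cumulants are small enough that, after the $N^{-m/2}$ normalization, they tend to $0$ for every $m \ge 3$, leaving only the Gaussian second cumulant and yielding the claimed convergence.

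The main obstacle is the boundary covariance calculation in the second step. One has to enumerate which pairings among the indices $(k, -k, l, -l)$ saturate the constraint $\sum j\, a_j = N$ in each regime, and verify that the determinantal ``defect'' produced by $\rho_4$, converted to a Riemann sum, yields precisely the geometric kernels $(1 - |s - t|)$ and $(s + t - 1)$ in (\ref{vvvv}). This reduces to a careful count of lattice points $(j_1,\ldots,j_4) \in [0, N-1]^4$ subject to $j_2 - j_1 = k$, $j_4 - j_3 = l$ under varying overlap constraints, and is the technical heart of the argument; once it is in place, the Gaussian limit is a soft consequence of the cumulant estimates.
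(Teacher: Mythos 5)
Your architecture is sound and is in fact closer to the original cumulant-based argument of \cite{pairs} than to the moment-method proof given in Section 4 of this paper: both you and the paper diagonalize the statistic in the Fourier basis, reducing everything to $\sum_k \hat g_N(k)\,|\Tr U^k|^2$, and the mean and the bulk covariance via Diaconis--Shahshahani are correct as you state them. The problem is that the two places where you write ``one shows'' are exactly where the content of the theorem lives. For the variance, the clean decomposition is, for $k,l\neq 0$,
$\cov(|T_{N,k}|^2,|T_{N,l}|^2)=\min(|k|,N)^2(\delta_{k,l}+\delta_{k,-l})+\kappa_4^{(N)}(k,-k,l,-l)$: the diagonal term already produces the \emph{entire} first integral of (\ref{vvvv}), including the plateau $\min(|t|,1)^2\equiv 1$ for $|t|>1$ (contrary to your attribution of that piece to the $\rho_4$ defect), while the two negative correction integrals are precisely the Riemann-sum limit of the rescaled fourth cumulant, whose piecewise-linear formula is (\ref{j3}) (equivalently the $p=4$ case of (\ref{semi2})--(\ref{J1})). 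Your plan to extract this from $\rho_4=\det(K_N)$ by lattice-point counting is workable, but it is the whole computation, not a remark; as written it is a gap.

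The more serious gap is the assertion that the joint cumulants $\kappa(|T_{N,k_1}|^2,\ldots,|T_{N,k_m}|^2)$ are ``small enough'' for $m\geq 3$. Expanding such a cumulant over connected partitions $\pi$ of the $2m$ indices $\{k_1,-k_1,\ldots,k_m,-k_m\}$, each product of trace cumulants is $O(N^{|\pi|})$ by (\ref{J4}), the Fourier weights contribute $N^{-m}$, and the normalization contributes $N^{-m/2}$; the number of contributing lattice tuples is $O(N^{\dim L_\pi})$ with $\dim L_\pi=2m-\mathrm{rank}$. If one only uses the obvious bound $\mathrm{rank}\geq|\pi|$ (disjoint supports of the blocks), the total is $O(N^{m/2})$, which \emph{diverges}; the estimate you need is $\mathrm{rank}\geq|\pi|+m-1$ for every connected partition, i.e.\ the linear independence of the block indicators $\chi_B$ together with $m-1$ of the group indicators $\chi_{[i]}$ --- this is exactly Lemma 4.6 of the paper, and it is what turns the exponent into $1-m/2\to-\infty$. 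So the cumulant route does work, and for $m\geq 3$ it is even slightly cleaner than the paper's moment method (connectedness makes every contributing partition automatically ``sub-optimal,'' so one never needs the optimal/sub-optimal dichotomy of Lemma 4.5), but the rank lemma is unavoidable and is not a soft consequence of Diaconis--Shahshahani or Weingarten calculus; you must state and prove it.
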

\begin{remark}
The limiting distribution of (\ref{localcase}) does not change if one replaces the circular difference (\ref{circlediff}) in the argument of $f(N\cdot)$
by the regular one and studies instead
\begin{align}
\label{localnoncirc}
\sum_{1\leq i,j\leq N} f(N\*(\theta_i-\theta_j)), 
\end{align}
since the number of pairs of the eigenvalues in a $O(N^{-1})$ neighborhood of $\theta=0$ is bounded in probability.
\end{remark}

The proof relied on the cumulant technique and had a strong combinatorial flavor. 
The purpose of this paper is to give a simpler proof that works for arbitrary $n>1.$
Below we formulate our main result.

\begin{thm}

Let $f \in C^{\infty}_c(\R^{n})$ be a smooth, symmetric, compactly supported function on $\R^{n}, \ 
n\geq 1, $ and $\theta=(\theta_1, \ldots, \theta_N) \in  \mathbb{T}^N$ be a CUE-distributed random vector.
Consider the $l$-tuple smoothed counting statistic $ S_N(f), \ l=n+1,$ defined in (\ref{kfoldstat}).  
Then  $\E S_N(f)$ satisfies (\ref{mathexp2}-\ref{c2}) and the normalized random variable $(S_N(f) -\E S_N(f))\*N^{-1/2}$ converges in distribution to the centered real Gaussian random variable $N(0, \sigma^2(f)$  with the limiting variance $\sigma^2(f)$ defined in (\ref{v4}).
\end{thm}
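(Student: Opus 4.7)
The plan is to reduce $S_N(f)$ to a weighted sum of products of traces $\Tr(U^k)$ via a Fourier expansion on $\mathbb{T}^n$, and then invoke the joint Gaussian behaviour of $\{\Tr(U^k)\}$ in the CUE. Since $f$ is compactly supported, for $N$ large the function $x\mapsto f(Nx_1,\ldots,Nx_n)$ is naturally identified with a smooth function $f_N$ on the torus $\mathbb{T}^n$, with Fourier coefficients $c_k(N)=(2\pi)^{-n/2}N^{-n}\hat f(k/N)$ up to rapidly decaying errors. Substituting this expansion into (\ref{kfoldstat}) and collapsing the sum over the unrestricted indices $(j_1,\ldots,j_{n+1})$ gives
\begin{equation*}
S_N(f)=\sum_{k\in\Z^n} c_k(N)\,\Tr\bigl(U^{-(k_1+\cdots+k_n)}\bigr)\prod_{m=1}^n \Tr(U^{k_m}),
\end{equation*}
with the convention $\Tr(U^0)=N$. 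The rapid decay of $\hat f$ effectively truncates the sum to $|k_m|\lesssim N$.

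The mean $\E S_N(f)$ is then obtained by substituting the known CUE mixed-moment formulas for $\E[\Tr(U^{k_0})\prod_m\Tr(U^{k_m})]$ (Diaconis--Shahshahani, Diaconis--Evans), which should reproduce (\ref{mathexp2}--\ref{c2}) after resummation. For the fluctuations, the plan is to use the joint Gaussian approximation $\Tr(U^k)\approx\sqrt{|k|}\,\xi_k$ with $\{\xi_k\}_{k\ge 1}$ i.i.d.\ standard complex Gaussians and $\xi_{-k}=\overline{\xi_k}$, valid in a moment sense up to $|k|\le N$. After subtracting the mean, $S_N(f)-\E S_N(f)$ is represented as a sum over $\Z^n$ of centered polynomials in these almost-Gaussian variables, each weighted by $c_k(N)\sim N^{-n}\hat f(k/N)$. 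A Wick/cumulant expansion computes the second moment and, after converting the resulting lattice Riemann sum to an integral over $\R^n$, produces the explicit limiting variance $\sigma^2(f)$ of (\ref{v4}). The Gaussian limit itself follows by the method of cumulants once one verifies that the $r$-th cumulant of $(S_N(f)-\E S_N(f))/\sqrt N$ tends to zero for every $r\ge 3$.

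The chief obstacle is the cumulant control. A single Fourier mode contributes a product of $n+1$ traces, which is manifestly non-Gaussian, so the CLT only emerges after summing over the $\sim N^n$ effectively contributing modes and survives precisely because the Fourier weights $c_k(N)$ are small. Concretely, I would bound the joint cumulants $|\kappa(\Tr(U^{k_1}),\ldots,\Tr(U^{k_r}))|$ uniformly in the indices using the Wick-type formulas for the determinantal CUE with Dirichlet kernel, and then verify that the resulting estimate for the $r$-th cumulant of $S_N(f)$ is $o(N^{r/2})$ for all $r\ge 3$. The advantage of this route over the combinatorial argument in \cite{pairs} is that the Fourier decomposition diagonalises the statistic in the Gaussian trace basis: contractions then occur at a single layer (Wick pairings of traces) rather than requiring the iterated unfolding of sine-kernel correlation functions in physical space that made the $n=1$ analysis so intricate.
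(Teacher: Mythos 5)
Your Fourier reduction of $S_N(f)$ to a weighted sum of products of traces is exactly the paper's starting point (formula (\ref{f2})), and closing the argument by showing that the $r$-th cumulant of $(S_N(f)-\E S_N(f))/\sqrt{N}$ vanishes for $r\geq 3$ is a legitimate variant of the paper's method of moments. But there is a genuine gap in the middle, and it comes in two parts. First, the joint-Gaussian picture $\Tr(U^k)\approx\sqrt{|k|}\,\xi_k$ with contractions occurring ``at a single layer (Wick pairings of traces)'' is not valid at the local scale relevant here. The mixed moments of traces agree with the Gaussian ones only while $\sum_j|k_j|\leq 2N$; in (\ref{kfoldstat}) each Fourier mode produces a product of $n+1$ traces with frequencies of order $N$, so this threshold is exceeded on a region of frequency space that contributes at leading order. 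Concretely, $\kappa_3^{(N)}(k_1,k_2,-(k_1+k_2))=k_1+k_2-N\neq 0$ once $k_1+k_2>N$ (formula (\ref{j2})), and such higher trace cumulants enter both the mean and the variance at order $N$: this is precisely why $\sigma^2(f)$ in (\ref{v4}) is a sum over \emph{all} connected partitions of $\{1,\ldots,2n+2\}$, with blocks of every size, and not merely over pairings of traces. A pure Wick computation would return an incorrect variance --- already visible for $n=1$ in (\ref{vvvv}), whose second and third terms come from third and fourth cumulants of traces. The same caveat applies to your computation of the mean via Diaconis--Shahshahani: you need the exact cumulant formulas valid for all frequencies (Lemma 2.1 of the paper, from \cite{sasha}), not the Gaussian approximation.

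Second, ``verify that the resulting estimate for the $r$-th cumulant of $S_N(f)$ is $o(N^{r/2})$'' is exactly where the work lies, and a uniform bound $|\kappa_p^{(N)}(k_1,\ldots,k_p)|\leq C_p N$ does not by itself deliver it. Expanding $\kappa_r(S_N(f))$ by the Leonov--Shiryaev/Malyshev formula as a sum over partitions $\pi$ of $\{1,\ldots,r(n+1)\}$ that connect the $r$ groups of indices, the contribution of a fixed $\pi$ is of order $N^{|\pi|-rn+\dim L_\pi}$, where $\dim L_\pi$ governs the number of contributing lattice points $k=O(N)$ compatible with the constraints $\sum_{i\in B}k_i=0$. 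Using only the $|\pi|$ block constraints gives $\dim L_\pi\leq r(n+1)-|\pi|$ and hence the useless bound $O(N^{r})$. What saves the day is that connectivity of $\pi$ across the $r$ groups forces $r-1$ \emph{additional} independent linear constraints, so that each connected partition contributes only $O(N)$; this rank count is the combinatorial heart of the paper (Lemmas 4.5--4.6, stated there in the moment-method language of optimal versus sub-optimal partitions) and is absent from your proposal. Once you replace the Gaussian approximation by the exact cumulant formulas and supply this linear-independence lemma, your cumulant route does go through and is essentially the paper's argument in a different bookkeeping.
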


The paper is organized as follows.  Some preliminary facts are given in Section 2.  Section 3 is devoted to the computation of mathematical expectation  and variance of $S_N(f).$ Theorem 1.3 is proven in Section 4.  Throughout the paper the notation $a_N=O(b_N)$ means 
that the ratio $ a_N/b_N$ is bounded from above in absolute value. The notation $a_N=o(b_N)$ means that $a_n/b_N\to 0$ as $N\to \infty.$

This material is based upon work supported by the National Science Foundation under Grant No. 1440140, while the author was in residence at the Mathematical Sciences Research Institute in Berkeley, California, during a part of the Fall semester of 2021.

\section{Preliminary Facts.}

Let $f \in C^{\infty}_c(\R^{n}).$  When $N$ is sufficiently large, the support of $f(N\*x), \ x=(x_1, \ldots, x_{n} ), $ is contained in the cube $[-\pi,\pi]^{n}$ and
one can write a Fourier series
            \[
                f(N\*x)=f(N\*x_1, \ldots, N\*x_n)=\frac{1}{(2\pi)^{n/2}\*N^{n}}\*\sum_{k\in\mathbb{Z}^n} \* \hat{f}(k_1\*N^{-1}, \ldots, k_n\*N^{-1})e^{i\*k\cdot x},
            \]
        where 
            \[
                \hat{f}(\xi)=\frac{1}{(2\pi)^{n/2}}\*
                \int_{\mathbb{R}^n}f(x)e^{-i\*\xi\cdot x}\*dx.
            \]
        
Then
            \begin{align}
                S_N(f) &=&\sum_{1\leq j_1, j_2, \ldots, j_{n+1}\leq N} f(N\*(\theta_{j_2}-\theta_{j_1})_c, \ldots, N\*(\theta_{j_{n+1}}-\theta_{j_1})_c) \nonumber\\
                \label{f2}
                &=& \frac{1}{(2\pi)^{n/2}\*N^{n}}\*
\sum_{k\in \mathbb{Z}^n} \hat{f}(k_1\*N^{-1}, \ldots, k_n\*N^{-1})\*\prod_{j=1}^{n+1} T_{N, k_j},
            \end{align}
            where
            \begin{align}
            \label{deftrace}
                T_{N,s}=\sum_{m=1}^N e^{i\*s\*\theta_m}=Tr (U^s)
            \end{align}
is the trace of the $s$-th power of a random unitary (CUE) matrix, and $k_{n+1}=-\sum_{j=1}^n k_j.$

First, we evaluate $\E S_N(f).\ $ One has
\begin{align}
            \label{f3}
  \E S_N(f)=  \frac{1}{(2\pi)^{n/2}\*N^{n}}\*
\sum_{k\in \mathbb{Z}^n} \hat{f}(k_1\*N^{-1}, \ldots, k_n\*N^{-1})\*\times\E [\prod_{j=1}^{n+1} T_{N, k_j}].        
            \end{align}
To compute $\E [\prod_{j=1}^{n+1} T_{N, k_j}],$   we study the joint cumulants of the traces of powers of a CUE matrix. We refer the reader to \cite{malyshev} (see also \cite{pairs}, Section 5) for the definition and basic properties of the joint cumulants.   We will use the notation $\kappa_m^{(N)}(k_1, \ldots, k_m)$     for the joint cumulant of $T_{N, k_1}, \ldots, T_{N, k_m}, $ i.e.
\[
\kappa_m^{(N)}(k_1, \ldots, k_m)=\kappa(T_{N, k_1}, \ldots, T_{N, k_m}). 
\]
Recall that
\begin{align}
\label{cumtomom}
\E [\prod_{j=1}^{n+1} T_{N, k_j}]=\sum_{\pi} \prod_{B\in \pi} \kappa_{|B|}^{(N)}(k_i: i \in B),
\end{align}
where the sum is over all partitions $\pi$ of $\{1, \ldots, n+1\},$ $B$ runs through the list of all blocks of the partition $\pi, \ $ and $|B|$ is the cardinality of a block $B.$
The following result was established in \cite{sasha} (see also Section 5 of \cite{pairs}):

\begin{lemma}
(i)  If $p>1$ and  either $k_1+\ldots +k_p\neq 0$ or $ \ \prod_i^p k_i=0, \ $ or both, then
\begin{align}
\label{semi1} 
\kappa_p^{(N)}(k_1, \ldots, k_p)=0.
\end{align}

(ii) If $p>1, \ \ \prod_i^p k_i\neq 0,$ and $\sum_{i=1}^p k_i=0,$ then
\begin{align}
\label{semi2} 
\kappa_p^{(N)}(k_1, \ldots, k_p)=\sum_{m=1}^p \frac{(-1)^{m}}{m} \*\sum_{\substack{(p_1, \ldots, p_m):
\\ p_1+\ldots +p_m=p, \ p_1, \ldots p_m\geq 1}} 
\frac{1}{p_1!\cdots p_m!} \* 
\sum_{\sigma \in S_p} \*  J_N(p_1, \ldots, p_m; k_{\sigma(1)},\ldots, k_{\sigma(p)}),
\end{align}

where for positive integers   $p_1, \ldots, p_m\geq 1, \ p_1+\ldots+p_m=p ,$ and integers $k_1, \ldots, k_p,$ satisfying $\sum_{i=1}^p k_i=0,$ we define
\begin{align}
& J_N(p_1, \ldots, p_m; k_1,\ldots, k_p)
:= \nonumber\\
\label{J1}
& \min\left(N, \ \max\left(0, \sum_{i=1}^{p_1} k_i, \sum_{i=1}^{p_1+p_2} k_i, \ldots, \sum_{i=1}^{p_1+\ldots+p_{m-1}} k_i\right)+
\max\left(0, \sum_{i=1}^{p_1} (-k_i), \ldots, \sum_{i=1}^{p_1+\ldots+p_{m-1}} (-k_i)\right)\right).
\end{align}

(iii) If $p=1$ then $\kappa_1^{(N)}(k)=N$ for $k=0$ and $\kappa_1^{(N)}(k)=0$ otherwise.
\end{lemma}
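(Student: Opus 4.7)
The plan is the classical method of cumulants, exploiting the Fourier expansion (\ref{f2}) to reduce everything to joint cumulants of the traces $T_{N,s}$, which are given explicitly by Lemma 2.1. By the method of cumulants -- a Gaussian has vanishing cumulants of order $\geq 3$ and is determined by its first two -- the CLT for $N^{-1/2}(S_N(f) - \E S_N(f))$ reduces to (a) the mean formula, (b) convergence of $\var(S_N(f))/N$ to $\sigma^2(f)$, and (c) vanishing of $\kappa_m(S_N(f))/N^{m/2}$ for every $m \geq 3$.

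For the mean, I insert (\ref{cumtomom}) into (\ref{f3}) and evaluate block-by-block using Lemma 2.1. Part (i) forces each non-singleton block to have its $k$'s summing to zero with nonzero product, part (iii) forces singleton blocks to carry $k_i = 0$, and part (ii) supplies the explicit value via (\ref{semi2}) and (\ref{J1}). Combined with the constraint $k_{n+1} = -\sum_{j \leq n} k_j$ built into (\ref{f2}), these conditions identify the contributing tuples, and converting the resulting sums into Riemann form produces the claimed expressions for $\E S_N(f)$.

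For the higher cumulants, by multilinearity of $\kappa_m$ applied to (\ref{f2}),
\[
\kappa_m(S_N(f)) = \frac{1}{(2\pi)^{mn/2} N^{mn}} \sum_{k^{(1)}, \ldots, k^{(m)} \in \Z^n} \Bigl(\prod_{i=1}^m \hat{f}(k^{(i)}/N)\Bigr)\, \kappa_m\Bigl(\prod_{j=1}^{n+1} T_{N, k^{(1)}_j}, \ldots, \prod_{j=1}^{n+1} T_{N, k^{(m)}_j}\Bigr),
\]
where $k^{(i)}_{n+1} := -\sum_{j=1}^n k^{(i)}_j$. I then apply the Leonov--Shiryaev/Malyshev expansion of a cumulant of products (cf.\ \cite{malyshev} and Section 5 of \cite{pairs}) to rewrite the inner cumulant as $\sum_{\pi \text{ indec.}} \prod_{B \in \pi} \kappa_{|B|}^{(N)}(k^{(i)}_j : (i,j) \in B)$, summed over partitions $\pi$ of $\{1,\ldots,m\} \times \{1,\ldots,n+1\}$ that are indecomposable with respect to the row grouping. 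Lemma 2.1 contributes both a constraint (each block $B$ must satisfy $\sum_{(i,j) \in B} k^{(i)}_j = 0$) and a uniform bound $|\kappa_{|B|}^{(N)}| \leq N$. The critical degrees-of-freedom count is: there are $mn$ free integer parameters; the $b = |\pi|$ block-sum-zero conditions give exactly $b-1$ independent linear constraints, since indecomposability implies that the only linear dependence among them is the global one forced by the row relations $\sum_{j=1}^{n+1} k^{(i)}_j = 0$; the Schwartz decay of $\hat{f}$ restricts the effective range of each $k^{(i)}_j$ to $O(N)$; and each of the $b$ cumulants contributes at most $N$. Multiplying these factors gives
\[
\kappa_m(S_N(f)) = O\bigl(N^{mn - (b-1)} \cdot N^{-mn} \cdot N^b\bigr) = O(N).
\]

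Consequently $\kappa_m(N^{-1/2}(S_N(f) - \E S_N(f))) = O(N^{1 - m/2}) \to 0$ for $m \geq 3$, so all higher cumulants vanish in the limit and (c) holds. For (b), with $m=2$ I identify the indecomposable pair-partitions of $2(n+1)$ points, evaluate each block via (\ref{semi2})--(\ref{J1}), and convert the Riemann sum in the rescaled variables $t_i = k^{(i)}/N$ into an integral; the piecewise structure of (\ref{v4}) -- the $\min(|t|,1)$ caps and the restricted integration domains -- traces directly back to the $\min(N, \cdot)$ truncation in (\ref{J1}) combined with the positivity constraints inside the $\max$'s. The main technical obstacle is precisely this variance calculation: for $n \geq 2$ the family of indecomposable pair-partitions of $2(n+1)$ elements is combinatorially richer than in the $n=1$ case of \cite{pairs}, and each partition type must be matched to a specific piece of (\ref{v4}); the $\min/\max$ structure of $J_N$ also requires careful splitting of the summation domain before passing to the integral. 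Once (b) and (c) are established, the CLT follows immediately from the method of cumulants.
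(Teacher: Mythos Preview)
Your proposal does not address the stated lemma. Lemma~2.1 is the explicit formula for the joint cumulants $\kappa_p^{(N)}(k_1,\ldots,k_p)$ of the traces $T_{N,k_j}$; the paper does not prove it here but cites it from \cite{sasha} (and Section~5 of \cite{pairs}). Your write-up, by contrast, \emph{assumes} Lemma~2.1 as a black box (``the traces $T_{N,s}$, which are given explicitly by Lemma~2.1'') and then outlines a proof of Theorem~1.3, the CLT for $S_N(f)$. Nothing in your text establishes the vanishing in part~(i), derives the formula~(\ref{semi2}) in part~(ii), or computes the first cumulant in part~(iii). So as a proof of the statement in question, it is simply off-target.

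If your intent was Theorem~1.3, then your route is legitimate but different from the paper's. The paper uses the method of \emph{moments}: it expands $\E(S_N(f)-\E S_N(f))^m$ via Lemma~4.1 as a sum over partitions $\pi$ of $\{1,\ldots,m(n+1)\}$ for which no row $[i]$ is a union of blocks, defines an equivalence relation $\sim_\pi$ on the rows, and shows (Lemma~4.5) that only ``optimal'' partitions---those whose equivalence classes all have size two---contribute at the leading order $N^{m/2}$, which recovers the Gaussian moment $(m-1)!!\,\sigma^m$. You instead attack the \emph{cumulants} $\kappa_m(S_N(f))$ directly via Leonov--Shiryaev over indecomposable partitions and claim the uniform bound $\kappa_m(S_N(f))=O(N)$. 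Your key assertion---that for an indecomposable $\pi$ the $|\pi|$ block-sum constraints have rank exactly $|\pi|-1$ modulo the row relations---is correct and is essentially the content of the paper's Lemma~4.6 (specialized to a single equivalence class), but you state it without proof; that step is where the combinatorics actually lives, and it deserves a justification along the lines of Lemma~4.6. Your approach is arguably sleeker (no optimal/sub-optimal dichotomy, no moment matching), while the paper's moment computation makes the emergence of the Wick pairings and the constant $\sigma^2(f)$ more transparent.
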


\begin{remark}
If $\sum_{i=1}^p k_i=0$ and $\sum_{i=1}^p |k_i|\leq 2\*N,$ then
\begin{align}
& J_N(p_1, \ldots, p_m; k_1,\ldots, k_p)= \nonumber \\
\label{J2}
& \max\left(0, \sum_{i=1}^{p_1} k_i, \sum_{i=1}^{p_1+p_2} k_i, \ldots, \sum_{i=1}^{p_1+\ldots+p_{m-1}} k_i\right)+
\max\left(0, \sum_{i=1}^{p_1} (-k_i), \ldots, \sum_{i=1}^{p_1+\ldots+p_{m-1}} (-k_i)\right).
\end{align}

Using a combinatorial identity (Lemma 2 in \cite{sasha})  one further obtains that
\begin{align}
\label{J3}
\kappa_p^{(N)}(k_1, \ldots, k_p)=0 \ \ {\text if} \ \   p>2   \ \ {\text and} \ \ \sum_{i=1}^p |k_i|\leq 2\*N.  
\end{align}
\end{remark}
\begin{remark}
It directly follows from (\ref{J1})  that 
$$0\leq J_N(p_1, \ldots, p_m; k_1,\ldots, k_p)\leq N.$$
Therefore (\ref{semi1}-\ref{semi2}) imply
\begin{align}
\label{J4}
|\kappa_p^{(N)}(k_1, \ldots, k_p)|\leq Const_p \*N,
\end{align}
where $Const_p$ depends only on $p.$
\end{remark}

\begin{remark}
Recall that $\kappa_p^{(N)}(k_1, \ldots, k_p)$ is a symmetric function of $k_1, \ldots, k_p$.  Direct computations allow one to derive joint cumulants in several important special cases:
\begin{align}
\label{j1}
& \kappa_2^{(N)}(k_1, -k_1)=min(N, |k_1|), \\
\label{j2}
& \kappa_3^{(N)}(k_1, k_2, -(k_1+k_2))=\begin{cases} 0, &\text{ if } k_1+k_2\leq N, \hspace{2mm} k_1, k_2\geq 0,\\
                                  k_1+k_2-N, &\text{ if } k_1+k_2>N,\hspace{2mm} 0\leq k_1, k_2 \leq N,   \\                                  
                                  k_2, &\text{if  }   k_1+k_2>N,\hspace{2mm} k_1>N, \hspace{2mm} 0\leq k_2 \leq N,  \\
                                  N, &\text{  } K_1\geq N,\hspace{2mm} k_2\geq N.
                                 \end{cases}.\\
\label{j3}
& \kappa_4^{(N)}(k_1, k_2, -k_1, -k_2)=\begin{cases} 0, &\text{ if } 1\leq |k_1|=|k_2|\leq N/2,\\
                                  N -2|k_1|, &\text{ if } N/2< |k_1|=|k_2|\leq N,\\                                  -N , &\text{if  }  |k_1|=|k_2|\geq N,\\
                                  ||k_1|-|k_2||-N, &\text{ if } 1\leq ||k_1|-|k_2||\leq N-1,\hspace{2mm}N\leq \max(|k_1|,|k_2|),\\
                                  N-|k_1|-|k_2|, &\text{if  } 1\leq |k_1|\neq |k_2|\leq N-1, N+1\leq |k_1|+|k_2|,\\
                                  0,&\text{else.}
                                   \end{cases}.
                                 \end{align}
                                 
We note that (\ref{j3}) directly follows  from Corollary 4.2 in (\cite{pairs}). 
(\ref{j2}) follows 
from (\ref{semi2}) and (\ref{J2}). Since $\kappa_p^{(N)}(k_1, \ldots, k_p)$  is a symmetric function that satisfies
$\kappa_p^{(N)}(k_1, \ldots, k_p)=\kappa_p^{(N)}(-k_1, \ldots, -k_p),$  the third cumulant formula (\ref{j2}) completely determines $\kappa_3^{(N)}(k_1, k_2, k_3)$ (recall that $\kappa_3^{(N)}$ vanishes if $k_3\neq -k_1-k_2$.)  The second cumulant formula (\ref{j1}) immediately follows from  (\ref{semi2}) and (\ref{J2}).                           \end{remark}          
   
   \vskip .1cm         
The main  contribution to (\ref{f2}) and (\ref{f3}) comes from $|k|=O(N).$ Define
\begin{align}
\label{ttox}
t_i=\frac{k_i}{N}, \ \ 1\leq i\leq p.
\end{align}
Then
\begin{align}
\label{scalcum}
c_p(t_1, \ldots, t_p):= \frac{1}{N}\* \kappa_p^{(N)}(t_1\*N, \ldots, t_p\*N)
\end{align}
does not depend on $N$ and is a bounded  function of $t_1, \ldots, t_p, $ which is identically zero on $t_1+\ldots+t_p\neq 0$ for $p>1$ and is piece-wise linear on 
$t_1+\ldots+t_p=0.$
To write down an explicit formula for $c_p(t_1, \ldots, t_p)$ we define functions $j(p_1, \ldots, p_m; t_1,\ldots, t_p)$
for positive integers   $p_1, \ldots, p_m,$ satisfying $\ p_1+\ldots+p_m=p ,$ and real numbers $t_1, \ldots, t_p,$ as
\begin{align}
& j(p_1, \ldots, p_m; t_1,\ldots, t_p)
:= \nonumber\\
\label{jj1}
& \min\left(1, \ \max\left(0, \sum_{i=1}^{p_1} t_i, \sum_{i=1}^{p_1+p_2} t_i, \ldots, \sum_{i=1}^{p_1+\ldots+p_{m-1}} t_i\right)+
\max\left(0, \sum_{i=1}^{p_1} (-t_i), \ldots, 
\sum_{i=1}^{p_1+\ldots+p_{m-1}} (-t_i)\right)\right).
\end{align}
Lemma 2.1 implies the following result.

\begin{lemma}
Rescaled joint cumulants $c_p$   defined in (\ref{scalcum}) can be written for $p>1$ and $\sum_{i=1}^p t_i=0$ as
\begin{align}
\label{jj2}
c_p(t_1, \ldots, t_p)=\sum_{m=1}^p \frac{(-1)^{m}}{m} \*\sum_{\substack{(p_1, \ldots, p_m):
\\ p_1+\ldots +p_m=p, \ p_1, \ldots p_m\geq 1}} 
\frac{1}{p_1!\cdots p_m!} \* 
\sum_{\sigma \in S_p} \*  j(p_1, \ldots, p_m; t_{\sigma(1)},\ldots, t_{\sigma(p)}),
    \end{align}
    where the functions $j(p_1, \ldots, p_m; t_1,\ldots, t_p)$ are defined in (\ref{jj1}).
    Moreover, the following holds:
  \begin{align}  
   & (i) \ \ c_p(t_1, \ldots, t_p), \ p>1, \ \text{ is a bounded symmetric piece-wise linear function on} \ \ 
   \sum_{i=1}^p t_i=0. \nonumber\\
  & (ii) \ \ c_p(t_1, \ldots, t_p)=0 \ \text{if} \ \ p>1 \ \ \text{and} \ \ \sum_{i=1}^p t_i\neq 0.\nonumber\\
   & (iii) \ \ c_1(0)=1 \ \ \text{and} \ \ c_1(t)=0 \ \ \text{for} \ \ t\neq 0. \nonumber
   \end{align}
\end{lemma}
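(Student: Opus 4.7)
My plan is to derive Lemma 2.4 as a direct rescaling of Lemma 2.1. The key observation is that the function $J_N$ in (\ref{J1}) is positively homogeneous of degree one jointly in $(N, k_1, \ldots, k_p)$: every inner sum $\sum_{i=1}^{p_1+\cdots+p_j} k_i$ is linear in the $k_i$'s, and both $\max(0,\cdot)$ and $\min(N,\cdot)$ commute with the scaling $(N, k) \mapsto (\lambda N, \lambda k)$ for $\lambda > 0$. Consequently, setting $k_i = t_i N$ and factoring $N$ out gives the identity
\[
J_N(p_1, \ldots, p_m; t_1 N, \ldots, t_p N) \;=\; N \cdot j(p_1, \ldots, p_m; t_1, \ldots, t_p),
\]
with $j$ as in (\ref{jj1}). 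This is the single computation on which the whole lemma rests.

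With this identity in hand, formula (\ref{jj2}) is obtained by plugging $k_i = t_i N$ (for rational $t_i$ with $t_i N \in \Z$) into (\ref{semi2}) and dividing by $N$; the resulting right-hand side is manifestly $N$-independent, and since it is continuous (in fact piecewise linear) in $(t_1, \ldots, t_p)$, I would adopt (\ref{jj2}) as the \emph{definition} of $c_p$ on the full hyperplane $\sum t_i = 0$, thereby removing any $N$-dependence. Property (iii) is then just Lemma 2.1(iii) after division by $N$; property (ii) follows from Lemma 2.1(i), which forces $\kappa_p^{(N)} = 0$ whenever $\sum k_i \neq 0$. For (i), the symmetrisation over $\sigma \in S_p$ built into (\ref{jj2}) gives symmetry; the bound $0 \leq j \leq 1$ together with the finite outer sum gives boundedness; and piecewise linearity is inherited from that of $j$, which is a composition of $\max$, $\min$, and linear functions.

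The one point worth flagging as a possible obstacle is the boundary case where $\sum t_i = 0$ but some $t_i = 0$ (equivalently $\prod t_i = 0$). On the lattice, Lemma 2.1(i) forces $\kappa_p^{(N)} = 0$ there, whereas Lemma 2.1(ii) — and hence Step~2 of my derivation — strictly applies only to $\prod k_i \neq 0$. Consistency of (\ref{jj2}) on this lower-dimensional locus then requires the alternating sum on the right-hand side to collapse to zero when some $t_i$ vanishes. This is exactly the telescoping combinatorial identity (Lemma~2 of \cite{sasha}) that was invoked to obtain (\ref{J3}), so the argument reduces to citing it. Everything else in the proof is algebra and homogeneity.
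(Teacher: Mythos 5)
Your proposal is correct and is essentially the paper's own argument: the paper offers no explicit proof beyond the remark that Lemma 2.1 implies the result, and the implicit content is exactly your homogeneity identity $J_N(p_1,\ldots,p_m;t_1N,\ldots,t_pN)=N\,j(p_1,\ldots,p_m;t_1,\ldots,t_p)$ applied to (\ref{semi2}), with (ii) and (iii) read off from parts (i) and (iii) of Lemma 2.1. Your flag about the locus $\prod t_i=0$ inside the hyperplane $\sum t_i=0$ is a genuine subtlety the paper passes over silently, and your resolution via the combinatorial identity of Lemma 2 of \cite{sasha} is the right way to close it.
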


\section{Expectation and Variance.}

We start with a computation of $\E S_N(f).$ It follows from (\ref{f3}), (\ref{cumtomom}), (\ref{scalcum}), and (\ref{j2}) that
\begin{align}
\label{c1}
  \E S_N(f) & =   \frac{1}{(2\pi)^{n/2}\*N^{n}}\*
\sum_{k_1, \ldots, k_n\in \mathbb{Z}} \hat{f}(k_1/N, \ldots, k_n/N)\*
  \sum_{\pi} N^{|\pi|}\prod_{B\in \pi}\*  c_{|B|}(k_i/N: i \in B) \\
  & =  \frac{1}{(2\pi)^{n/2}}\*\sum_{\pi}\* N^{|\pi|-n}\* \sum_{k_1, \ldots, k_n\in \mathbb{Z}} \hat{f}(k_1/N, \ldots, k_n/N)\*\prod_{B\in \pi}\*  c_{|B|}(k_i/N: i \in B), \nonumber 
 \end{align}
where $k_{n+1}=-\sum_{i=1}^n k_i, \ $ the sum is over all partitions $\pi$ of $\{1, \ldots, n+1\},$ $B$ runs through the list of all blocks of the partition $\pi,\ |\pi| \ $ is the number of blocks in a partition $\pi,$ and $|B|$ is the cardinality of a block $B.$ 

Denote a linear subspace of $\R^{n+1}$ as
\begin{align}
\label{lb}
L_{\pi}:=\{t=(t_1, \ldots, t_{n+1})\in\R^{n+1} : \sum_{i=1}^{n+1} t_i=0; \ \ \sum_{i\in B} t_i=0 \ \forall B\in \pi \}.
\end{align}
It follows from Lemma 2.5, parts (ii)-(iii), that for any fixed partition $\pi$ the summation in (\ref{c1}) is over $k=(k_1, \ldots, k_{n+1}) \in L_{\pi} \cap \frac{1}{N}\*\Z^{n+1}.$ Indeed, $k$ satisfies the following system of linear equations of rank $|\pi|:$
\begin{align}
\label{lineq}
\begin{cases}
& \sum_{i=1}^{n+1} k_i=0, \\
& \sum_{i\in B} k_i=0, \ \ \forall B\in \pi,
\end{cases}
\end{align}
Observe that the first linear equation in (\ref{lineq}) follows from the remaining $|\pi|$ independent linear equations in (\ref{lineq}). Therefore
\begin{align}
\label{diml}
\dim L_{\pi}=n+1-|\pi|.
\end{align}

We note that the sums in (\ref{c1}) are Riemann sums (up to a multiplicative factor 
$\frac{N}{(2\pi)^{n/2}}$)
corresponding to smooth and fast decaying at infinity functions
$\hat{f}(t_1, \ldots, t_n)\*
\prod_{B\in \pi}\*c_{|B|}(t_i: i \in B)$ on $L_{\pi}$.
Denote by $m$ the number of blocks in $\pi$ and by $n_1, \ldots, n_m$ the cardinalities of the blocks. Using Lemma 2.5 we are ready to obtain asymptotics of the mean of $S_N(f).$

\begin{lemma}
\begin{align}
\label{mathexp2}
\E S_N(f)=\mathcal{M}(f)\*N +O(1),
\end{align}
where
\begin{align}
\label{c2}
& \mathcal{M}(f)=  \frac{1}{(2\pi)^{n/2}}\*\sum_{m=1}^n\* \frac{1}{m!}\*\sum_{\substack{(n_1, \ldots, n_m):
\\ n_1+\ldots +n_m=n+1, \ n_1, \ldots, n_m\geq 1}}  \* \frac{(n+1)!}{n_1!\cdots n_m!}\times\\ 
& 
\int_{L_{\pi}} \hat{f}(t_1, \ldots, t_n)\*
  \prod_{j=1}^m c_{n_j}(t_{M_{j-1}+1}, \ldots, t_{M_j-1}, -t_{M_{j-1}+1}-\ldots - t_{M_j-1}) \*d\lambda,\nonumber
 \end{align}
$ t_{n+1}=-\sum_{i=1}^n t_i, \ \ M_j=n_1+\ldots+n_j, \ 1\leq j\leq m,\  M_0=0, \ $ 
$L(n_1, \ldots, n_m)\subset \R^{n+1}$ is defined as
\begin{align}
\label{lblb}
L_{n_1, \ldots, n_m}:=\{t=(t_1, \ldots, t_{n+1})\in\R^{n+1} : \ \sum_{i=1}^{n+1} t_i=0; \ \ \sum_{M_{j-1}<i\leq M_j} t_i=0, \ \ 1\leq j\leq m\},
\end{align}
$\ \lambda \ $  is the Lebegsue measure on $L_{\pi},$  i.e.
\begin{align}
\label{lebmeas}
\lambda=\prod_{j: n_j>1} dt_{M_{j-1}+1}\cdots dt_{M_j-1},
\end{align}
and rescaled joint cumulant functions $c_{n_j}$'s are defined in (\ref{j1}-\ref{j2}).
 \end{lemma}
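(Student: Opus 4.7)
The plan is to start from the cumulant expansion (\ref{c1}), evaluate each partition's inner sum as a Riemann sum, and reorganize the collected contributions into the compositional form (\ref{c2}).

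Fix a partition $\pi$ of $\{1,\ldots,n+1\}$ with $m=|\pi|$ blocks. By Lemma~2.5(ii)-(iii), the product $\prod_{B\in\pi}c_{|B|}(k_i/N:i\in B)$ vanishes unless $\sum_{i\in B}k_i=0$ for every $B\in\pi$, so the inner sum in (\ref{c1}) is restricted to $k\in L_\pi\cap\Z^{n+1}$, a sublattice of rank $d_\pi:=n+1-m$ by (\ref{diml}). Parametrizing $L_\pi$ by its $d_\pi$ ``free'' coordinates (all but one per block) identifies this sublattice with $\Z^{d_\pi}$ and the restriction of Lebesgue measure with $\lambda$ from (\ref{lebmeas}). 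Setting $t=k/N$ turns the sum into a genuine Riemann sum of mesh $1/N$ for
\[
F_\pi(t):=\hat f(t_1,\ldots,t_n)\prod_{B\in\pi}c_{|B|}(t_i:i\in B),
\]
so the $\pi$-summand of (\ref{c1}) becomes $\frac{N^{m-n}}{(2\pi)^{n/2}}\sum_{k\in L_\pi\cap\Z^{n+1}}F_\pi(k/N)$.

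Since $\hat f$ is Schwartz and each $c_{|B|}$ is bounded and piecewise linear on $L_\pi$, $F_\pi$ is globally Lipschitz there with rapid decay. A standard argument---Lipschitz bound on a ball of radius $R$ gives Riemann-sum error $O(R^{d_\pi}/N)$, Schwartz decay of $\hat f$ dominates the tail by $O(R^{-M})$ for any $M$, and optimizing over $R$---yields $\frac{1}{N^{d_\pi}}\sum_k F_\pi(k/N)=\int_{L_\pi}F_\pi\,d\lambda+O(N^{-1})$. Multiplying by $N^{m-n}\cdot N^{d_\pi}/(2\pi)^{n/2}=N/(2\pi)^{n/2}$ shows the $\pi$-summand equals $(2\pi)^{-n/2}N\int_{L_\pi}F_\pi\,d\lambda+O(1)$. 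Summing over the finitely many partitions gives $\E S_N(f)=\mathcal M(f)\,N+O(1)$ with $\mathcal M(f)=(2\pi)^{-n/2}\sum_\pi\int_{L_\pi}F_\pi\,d\lambda$.

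To recast this in the form (\ref{c2}), I rewrite the sum over unordered partitions as $\frac{1}{m!}$ times the sum over ordered partitions into $m$ labeled blocks, then group ordered partitions by the composition $(n_1,\ldots,n_m)$ of block sizes. Each composition admits $\frac{(n+1)!}{n_1!\cdots n_m!}$ ordered partitions, and by the $S_{n+1}$-symmetry of $F_\pi$ on the hyperplane $\{t_1+\ldots+t_{n+1}=0\}$---a consequence of the symmetry of $f$ together with the translation invariance built into (\ref{kfoldstat}), combined with the symmetry of each cumulant $c_{|B|}$---all such orderings contribute the same integral as the canonical consecutive partition over $L_{n_1,\ldots,n_m}$ in (\ref{lblb}). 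This produces (\ref{c2}).

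The main obstacle is the final reorganization: one must verify that the integrand restricted to $\sum_{i=1}^{n+1} t_i=0$ carries the full $S_{n+1}$-symmetry needed to collapse all partitions of a given shape onto a single canonical subspace, which requires carefully unpacking the symmetry of $f$ together with the translation-invariant structure of (\ref{kfoldstat}) (so that the ``missing'' coordinate $t_{n+1}=-\sum_{i=1}^n t_i$ enters $\hat f$ on equal footing with $t_1,\ldots,t_n$). The Riemann-sum estimate itself is routine once the Lipschitz bound and Schwartz decay of $F_\pi$ are in hand---the kinks of the piecewise-linear $c_p$'s lie on finitely many codimension-one strata of $L_\pi$ and contribute only to the $O(N^{-1})$ error.
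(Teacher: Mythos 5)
Your proposal is correct and follows the same route the paper takes: the cumulant expansion (\ref{c1}), the observation that each partition's sum is a Riemann sum over $L_{\pi}\cap\frac{1}{N}\Z^{n+1}$ of dimension $n+1-|\pi|$ so that every partition contributes at order $N$, and the regrouping of partitions by block-size composition with multiplicity $\frac{1}{m!}\frac{(n+1)!}{n_1!\cdots n_m!}$. You in fact supply two details the paper leaves implicit --- the quantitative Riemann-sum error giving the $O(1)$ remainder, and the symmetrization argument (resting on the symmetry of $\E\prod_j T_{N,k_j}$ in $(k_1,\ldots,k_{n+1})$ on the hyperplane $\sum_i k_i=0$) that justifies collapsing all partitions of a given shape onto the canonical consecutive one.
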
 
 \begin{remark}
 For singleton blocks $B: |B|=n_j=1, \ $ we write $c_{n_j}(t_{M_{j-1}+1})$ in (\ref{c2}) and use $c_{n_j}(t_{M_{j-1}+1})=c_{n_j}(0)=1$ on $L_{\pi}.$
 \end{remark}
 \vskip .4cm

 Next, we study $\var S_N(f).$ It follows from (\ref{f2}) that
\begin{align}
\label{v1} 
& \var(S_N(f))= \frac{1}{(2\pi)^{n}\*N^{2n}}\* \sum_{k^{(1)}\in \mathbb{Z}^n} \* \sum_{k^{(2)}\in \mathbb{Z}^n} \* \hat{f}(k^{(1)}/N)\* \hat{f}(k^{(2)}/N) \times\\
& 
 \left(\E \left[
\prod_{j=1}^{n+1} T_{N, k_j}\*\prod_{j=1}^{n+1} T_{N, k_{n+1+j}}\right] - \E\left[
\prod_{j=1}^{n+1} T_{N, k_j}\right]\* \E\left[
\prod_{j=1}^{n+1} T_{N, k_{n+1+j}}\right]  \right),\nonumber
\end{align}  
where $k^{(1)}=(k_1, \ldots, k_{n}), \ \ k^{(2)}=(k_{n+2}, \ldots, k_{2n+1})\in\Z^{n}, \ \ k_{n+1}:=-\sum_{i=1}^n k_i,$ \\ $k_{2n+2}:=-\sum_{i=1}^n k_{n+1+i},$
and $T_{N,s}$ is defined in (\ref{deftrace}).

\vskip .2cm
To study the covariance of the products of traces in (\ref{v1}), we rewrite it using (\ref{cumtomom}) as
\begin{align}
\label{v2}
\E \left[
\prod_{j=1}^{n+1} T_{N, k_j}\*\prod_{j=1}^{n+1} T_{N, k_{n+1+j}}\right] - \E\left[
\prod_{j=1}^{n+1} T_{N, k_j}\right]\* \E\left[
\prod_{j=1}^{n+1} T_{N, k_{n+1+j}}\right]=
\sum_{\pi}^* \prod_{B\in \pi} \kappa_{|B|}^{(N)}(k_i: i \in B),
\end{align}
where the sum is over all partitions $\pi$ of $\{1,2,\ldots, 2n+2\},$ satisfying the condition (\ref{condition}) below, $B$ runs through the list of all blocks of the partition $\pi, \ $ and $|B|$ is the cardinality of a block $B.\ $  The condition on $\pi$ is that it is not a union of partitions of $\{1, \dots, n+1 \}$ and $\{n+2, \ldots, 2n+2\},$ in other words:
\begin{align}
\label{condition}
 & {\text The \ set \ }\{1,2,\ldots, n+1\}  {\text \ is \ not \ a \ union \ of \ some \ blocks \ 
 of \ } \pi. 
\end{align}
Similarly to the $\E S_N(f)$ computations, define
\begin{align}
\label{lb2}
L_{\pi}:=\{t=(t_1, \ldots, t_{2n+2})\in\R^{2n+2} : \sum_{i=1}^{n+1} t_i=0, \  \sum_{i=1}^{n+1} t_{n+1+i}=0; \ \ \sum_{i\in B} t_i=0 \ \forall B\in \pi \}.
\end{align}
Using (\ref{v2}) we obtain
\begin{align}
\label{v10} 
\var(S_N(f)) & = \frac{1}{(2\pi)^{n}\*N^{2n}}\* \sum_{k^{(1)}\in \mathbb{Z}^n} \* \sum_{k^{(2)}\in \mathbb{Z}^n} \* \hat{f}(k^{(1)}/N)\* \hat{f}(k^{(2)}/N) \*
\sum_{\pi}^* \prod_{B\in \pi} \kappa_{|B|}^{(N)}(k_i: i \in B)\\
& = \frac{1}{(2\pi)^{n}}\* \sum_{\pi}^* N^{|\pi|-2\*n} \*\sum_{k^{(1)}\in \mathbb{Z}^n} \* \sum_{k^{(2)}\in \mathbb{Z}^n} \*\hat{f}(k^{(1)}/N)\* \hat{f}(k^{(2)}/N) \*\prod_{B\in \pi} c_{|B|}(k_i/N: i \in B). \nonumber
\end{align} 

\begin{remark}
As before, the sum $\sum_{\pi}^*$ in (\ref{v10}) is over all partitions $\pi$ of $\{1,2,\ldots, 2n+2\}$ satisfying the condition (\ref{condition}). The shorthand notation 
 $c_{|B|}(t_i: i\in B)$  means that the arguments of $c_{|B|}$ are the $t_i$'s corresponding to $i\in B.$
 Since the joint cumulant functions are symmetric the order of the variables is not important.
\end{remark}

It follows from Lemma 2.5, (ii)-(iii), that for any fixed $\pi$ the summation in (\ref{v10}) is over
$k=(k_1, \ldots, k_{2n+2}) \in L_{\pi} \cap \frac{1}{N}\*\Z^{2n+2},$
since $k$ satisfies the following system of linear equations of rank $|\pi|+1:$
\begin{align}
\label{lineq2}
\begin{cases}
& \sum_{i=1}^{n+1} k_i=0, \\
& \sum_{i=1}^{n+1+i} k_i=0, \\
& \sum_{i\in B} k_i=0, \ \ \forall B\in \pi.
\end{cases}
\end{align}
The first linear equation $\sum_{i=1}^{n+1} k_i=0$ follows from the remaining $|\pi|+1$ independent linear equations in (\ref{lineq2}). Therefore
\begin{align}
\label{dimll}
\dim L_{\pi}=2n+2-(|\pi|+1)=2n+1-|\pi|.
\end{align}
Proceeding as in the case of the mathematical expectation above we arrive at
\begin{lemma}
\begin{align}
\label{v3}
\var(S_N(f))=\sigma^2(f)\*N +O(1),
\end{align}
where
\begin{align}
 \label{v4}
 \sigma^2(f)= \frac{1}{(2\pi)^{n}}\*\sum_{\pi}^* \prod_{B\in \pi} \*\int_{L_{\pi}} \ 
 f(t^{(1)})\*f(t^{(2)})\*\prod_{B\in\pi} 
 c_{|B|}(t_i: i\in B) \ \* d\lambda,
    \end{align}
where the sum in (\ref{v4}) is over all partitions $\pi$ of $\{1,2,\ldots, 2n+2\},$ satisfying the condition (\ref{condition}), $B$ runs through the list of all blocks of the partition $\pi, \ \ |B|$ is the cardinality of a block $B,\  \ t^{(1)}=(t_1, \ldots, t_n), \ t^{(2)}=(t_{n+2}, \ldots, t_{2n+1}),$
and $\lambda$ is the Lebesgue measure on $L_{\pi},$ i.e. it is the product of $dt_i$'s taken over all independent variables $t_i$ from the system of linear equations
\begin{align}
\label{lineq22}
\begin{cases}
& \sum_{i=1}^{n+1} t_i=0, \\
& \sum_{i=1}^{n+1+i} t_i=0, \\
& \sum_{i\in B} t_i=0, \ \ \forall B\in \pi.
\end{cases}
\end{align}
\end{lemma}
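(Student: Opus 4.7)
The plan is to follow the template already used for $\E S_N(f)$ in Lemma 3.1: recognize each inner sum in (\ref{v10}) as a Riemann sum on the affine subspace $L_\pi$, then pass to the limit. Fix a partition $\pi$ of $\{1,\ldots,2n+2\}$ satisfying (\ref{condition}). By Lemma 2.5(ii)--(iii), the factor $\prod_{B \in \pi} c_{|B|}(k_i/N : i \in B)$ vanishes unless $\sum_{i \in B} k_i = 0$ for every block $B$, so after collecting constraints the effective summation runs over the lattice $L_\pi \cap \tfrac{1}{N}\Z^{2n+2}$, which has mesh $1/N$ and dimension $d := \dim L_\pi = 2n+1-|\pi|$ (as in (\ref{dimll})).

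Writing
\begin{equation*}
F(t) := \hat{f}(t^{(1)})\, \hat{f}(t^{(2)}) \prod_{B\in\pi} c_{|B|}(t_i : i \in B),
\end{equation*}
I observe that $\hat{f}$ is Schwartz (since $f \in C^\infty_c(\R^n)$) and each $c_{|B|}$ is bounded and piecewise linear on $L_\pi$ by Lemma 2.5(i), so $F$ is globally Lipschitz with rapid decay along $L_\pi$. A standard Riemann sum estimate then yields
\begin{equation*}
\frac{1}{N^d} \sum_{k \in L_\pi \cap \frac{1}{N}\Z^{2n+2}} F(k/N) = \int_{L_\pi} F\, d\lambda + O(N^{-1}),
\end{equation*}
where $\lambda$ is the Lebesgue measure determined by the parametrization in (\ref{lineq22}). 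Multiplying by the prefactor $N^{|\pi|-2n}/(2\pi)^n$ from (\ref{v10}) and using the identity $|\pi|-2n+d=1$ shows that the contribution of $\pi$ to $\var(S_N(f))$ is exactly $\frac{N}{(2\pi)^n}\int_{L_\pi} F\,d\lambda + O(1)$. Summing over all partitions satisfying (\ref{condition}) gives (\ref{v3})--(\ref{v4}).

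The main obstacle is controlling the Riemann sum error uniformly across $\pi$: one must truncate the summation to a region of diameter $O(1)$ in the $t$-variables using the Schwartz decay of $\hat{f}$ (which contributes $O(N^{-M})$ tails for every $M$), bound the Lipschitz seminorm of $F$ on this region from the explicit formulas for the $c_{|B|}$'s, and verify that the lattice structure of $L_\pi \cap \tfrac{1}{N}\Z^{2n+2}$ is compatible with the Lebesgue measure chosen in (\ref{v4}), so that no stray Jacobian is lost when moving from the sum to the integral. All three ingredients are local to the structure of the $c_{|B|}$ functions and are handled in the same fashion as in Lemma 3.1, which is why the same $O(1)$ remainder appears here as it did for the expectation.
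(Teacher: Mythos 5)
Your proposal is correct and follows essentially the same route as the paper, which proves this lemma by the identical Riemann-sum argument used for $\E S_N(f)$ (the key points being $\dim L_{\pi}=2n+1-|\pi|$ from (\ref{dimll}) and the resulting exponent $|\pi|-2n+\dim L_{\pi}=1$). Your additional remarks on truncation via the Schwartz decay of $\hat f$, the Lipschitz bound on the piecewise-linear cumulant functions, and the unit covolume of $L_{\pi}\cap\Z^{2n+2}$ in the coordinates defining $\lambda$ are exactly the details the paper leaves implicit.
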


\section{Proof of the Main Result}
The section is devoted to the proof of Theorem 1.3. The proof uses the method of moments and is combinatorial in nature.

\begin{proof}
Fix a positive integer $m>2.$ We have
\begin{align}
\label{momentm} 
& \E (S_N(f)-\E S_N(f))^m= \frac{1}{(2\pi)^{\frac{n\*m}{2}}\*N^{m\*n}}\* \sum_{k^{(1)}\in \mathbb{Z}^n} \ldots \sum_{k^{(m)}\in \mathbb{Z}^n} \* \hat{f}(k^{(1)}/N)\cdots \hat{f}(k^{(m)}/N) \times\\
& 
 \E \left[\prod_{i=0}^{m-1} \left(
\prod_{j=1}^{n+1} T_{N, k_{(n+1)\*i+j}}-
\E \prod_{j=1}^{n+1}  T_{N, k_{(n+1)i+j}}\right) \right],\nonumber
\end{align}  
$k^{(1)}=(k_1, \ldots, k_{n}), \ldots , k^{(m)}=(k_{(m-1)\*(n+1)+1}, \ldots, k_{m\*(n+1)-1})\in\Z^{n}, $\\ $k_{n+1}:=-\sum_{j=1}^n k_j, \ \ k_{2(n+1)}=-\sum_{j=1}^{n} k_{n+1+j},\ldots, k_{m(n+1)}:=- \sum_{j=1}^n k_{(m-1)\*(n+1)+j}.$

The mathematical expectation on the r.h.s. of (\ref{momentm}) can be written in terms of joint cumulants 
using the following lemma.
\begin{lemma}
For centered random variables
$ X_1, \ldots, X_{m\*l}$ with finite moments,
\begin{align}
\label{cenmoments1}
\E \left[\prod_{i=0}^{m-1} \left(
\prod_{j=1}^{l} X_{i\*l+j}-
\E \prod_{j=1}^{l}  X_{i\*l+j}\right) \right]
= \sum^*_{\pi} \prod_{B\in \pi} \kappa(X_i: i \in B),
\end{align}
where the sum on the r.h.s. of (\ref{cenmoments1}) is over all partitions $\pi$ of $\{1, \ldots, m\*l\}$ that do not contain a partition of $\{ i\*l +1, \ldots,  (i+1)\*l\}$ for any $0\leq i\leq m-1,$ i.e. none of the sets $\{ i\*l +1, \ldots,  (i+1)\*l\}$  can be represented as a union of some blocks of $\pi.$
\end{lemma}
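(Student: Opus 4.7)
The plan is to prove (\ref{cenmoments1}) by expanding the centered product, applying the standard moment-cumulant formula to each expectation that appears, and then collecting terms by partition of $\{1,\ldots,m\*l\}$, so that an inclusion-exclusion cancels precisely those partitions for which some $\{i\*l+1,\ldots,(i+1)\*l\}$ is a union of blocks.

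Set $A_i:=\{i\*l+1,\ldots,(i+1)\*l\}$ for $0\leq i\leq m-1$ and $Y_i:=\prod_{j\in A_i} X_j$. The starting point is the elementary multilinear identity
\begin{align*}
\prod_{i=0}^{m-1}(Y_i-\E Y_i)=\sum_{S\subseteq\{0,\ldots,m-1\}}(-1)^{|S|}\prod_{i\notin S} Y_i\cdot\prod_{i\in S}\E Y_i,
\end{align*}
which, after taking expectations, rewrites the left-hand side of (\ref{cenmoments1}) as a signed sum over $S$. To each expectation I apply the classical moment-cumulant formula
$\E[X_{j_1}\cdots X_{j_r}]=\sum_{\rho\vdash\{j_1,\ldots,j_r\}}\prod_{B\in\rho}\kappa(X_j:j\in B)$, producing a partition $\rho_0$ of $\bigcup_{i\notin S}A_i$ together with partitions $\rho_i$ of $A_i$ for each $i\in S$. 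Their disjoint union is a partition $\tilde\pi$ of $\{1,\ldots,m\*l\}$ with the property that $A_i$ is a union of blocks of $\tilde\pi$ for every $i\in S$.

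I then swap the order of summation and group by $\tilde\pi$. Setting $T(\tilde\pi):=\{0\leq i\leq m-1:A_i\text{ is a union of blocks of }\tilde\pi\}$, the data $(S,\rho_0,(\rho_i)_{i\in S})$ reconstructs $\tilde\pi$ if and only if $S\subseteq T(\tilde\pi)$, in which case $\rho_0$ and the $\rho_i$ are uniquely determined as restrictions of $\tilde\pi$ to the corresponding index sets. Consequently
\begin{align*}
\E\Bigl[\prod_{i=0}^{m-1}(Y_i-\E Y_i)\Bigr]=\sum_{\tilde\pi}\Bigl(\sum_{S\subseteq T(\tilde\pi)}(-1)^{|S|}\Bigr)\prod_{B\in\tilde\pi}\kappa(X_j:j\in B),
\end{align*}
and the inner alternating sum equals $(1-1)^{|T(\tilde\pi)|}$, which vanishes unless $T(\tilde\pi)=\emptyset$, i.e.\ unless no $A_i$ is a union of blocks of $\tilde\pi$ --- precisely the restriction on $\pi$ encoded by $\sum^*_\pi$ in the statement.

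The main obstacle is the combinatorial bookkeeping at the collection step: one has to verify that the assignment $(S,\rho_0,(\rho_i)_{i\in S})\mapsto\tilde\pi$ is well-defined and that for each target $\tilde\pi$ its fibers are parametrized without overcounting precisely by the subsets of $T(\tilde\pi)$. Once this identification is in place the binomial cancellation is automatic and produces (\ref{cenmoments1}). The centering hypothesis on the $X_i$ is in fact not essential for the algebraic identity itself --- the centering enters only through the blocks $Y_i$ --- so the same argument would give an analogous formula for non-centered variables, which is compatible with the intended application to the (non-centered) traces $T_{N,k}$.
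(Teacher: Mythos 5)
Your proof is correct and takes essentially the same route as the paper's: expand the centered product, apply the moment--cumulant formula to each expectation, and observe that the coefficient of $\prod_{B\in\pi}\kappa(X_i:i\in B)$ is the alternating sum $\sum_{k=0}^{s}(-1)^k\binom{s}{k}=(1-1)^s$ over the $s$ blocks-respecting sets $A_i$, which vanishes unless no $A_i$ is a union of blocks of $\pi$. Your write-up merely makes explicit the fiber-counting (that the data $(S,\rho_0,(\rho_i)_{i\in S})$ producing a given $\tilde\pi$ are parametrized exactly by the subsets $S\subseteq T(\tilde\pi)$), a bookkeeping step the paper leaves implicit.
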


\begin{proof}

It follows from the formula expressing moments in  terms of cumulants (see e.g. (\ref{cumtomom})) that the r.h.s. of (\ref{cenmoments1}) is equal to a linear combination of $\prod_{B\in \pi} \kappa(X_i: i \in B),$  where
$\pi$ runs over the list of partitions of $\{1,2,\ldots, m\*l\}.$  Thus our goal is to show that a coefficient in the linear combination is either $1$ or $0$ depending on whether $\pi$ satisfies the condition in Lemma 4.1 or not.

If  any  sub-collection of $\pi$ is not a partition of $ \{ i\*l +1, \ldots,  (i+1)\*l\}$ for any  $\ i=1, \ldots,m,$ then the coefficient in front of the product 
$ \prod_{B\in \pi} \kappa(X_i: i \in B)$ in the linear 
combination is $1$ since the only contribution comes from $\E \left [\prod_{i=0}^{m-1} 
\prod_{j=1}^{l} X_{i\*l+j}\right].$

Finally, suppose $s, \ 1\leq s\leq m,$ of the sets $ \{ i\*l +1, \ldots,  (i+1)\*l\}, \ 1\leq i \leq m,$ 
can be represented as unions of some blocks of $\pi.$
Then the coefficient in front of  $\prod_{B\in \pi} \kappa(X_i: i \in B)$ is equal to  
\begin{align}
\sum_{k=0}^s (-1)^k \frac{s!}{k!\*(s-k)!}=0.
\end{align}
\end{proof}

Following Lemma 4.1  we can rewrite the $m$-th centered moment as
\begin{align}
\label{momentmm} 
&
\E (S_N(f)-\E S_N(f))^m= \\
& \frac{1}{(2\pi)^{\frac{n\*m}{2}}\*N^{m\*n}}\* \sum_{k^{(1)}\in \mathbb{Z}^n} \ldots \sum_{k^{(m)}\in \mathbb{Z}^n} \* \hat{f}(k^{(1)}/N)\cdots \hat{f}(k^{(m)}/N) \*
\sum_{\pi}^* \prod_{B\in \pi} \kappa_{|B|}^{(N)}(k_i: i \in B),\nonumber
\end{align}
where, as before, $k^{(1)}=(k_1, \ldots, k_{n}),  \ldots , k^{(m)}=(k_{(m-1)\*(n+1)+1}, \ldots, k_{m\*(n+1)-1})\in\Z^{n},$\\ $ k_{n+1}=-\sum_{j=1}^n k_j, \ldots, k_{m(n+1)}=- \sum_{j=1}^n k_{(m-1)\*(n+1)+j}.$

As in Section 3, we consider a linear subspace
\begin{align}
\label{lb3}
L_{\pi}:=\{t\in\R^{m\*(n+1)} : \sum_{j=1}^{n+1} t_{i(n+1)+j}=0, \  \forall
\ 0\leq i\leq m-1; \ \ \sum_{j\in B} t_j=0 \ \forall B\in \pi \}.
\end{align}

It follows from Lemma 2.5, (ii)-(iii), that for any fixed $\pi$ the summation in (\ref{momentmm}) is over
$k=(k_1, \ldots, k_{m\*(n+1)}) \in L_{\pi} \cap \frac{1}{N}\*\Z^{m\*(n+1)},$
since $k$ satisfies the following system of linear equations:
\begin{align}
\label{lineq3}
\begin{cases}
& \sum_{j=1}^{n+1} k_{i\*(n+1)+j}=0, \ \forall \ 0\leq i\leq m-1, \\
& \sum_{i\in B} k_i=0, \ \ \forall B\in \pi.
\end{cases}
\end{align}

Using (\ref{scalcum}), we rewrite (\ref{momentmm}) as
\begin{align}
\label{momentmmm} 
&
\E (S_N(f)-\E S_N(f))^m= \\
& \frac{1}{(2\pi)^{\frac{n\*m}{2}}\*N^{m\*n}}\* \sum_{k\in L^{\pi}\cap \frac{1}{N}\*\mathbb{Z}^{m\*(n+1)}} \* \hat{f}(k^{(1)}/N)\cdots \hat{f}(k^{(m)}/N) \*
\sum_{\pi}^* \ N^{|\pi|}\*\prod_{B\in \pi} c_{|B|}(k_i/N: i \in B)=\nonumber \\
& \frac{1}{(2\pi)^{\frac{n\*m}{2}}}\sum_{\pi}^* \* N^{|\pi|-m\*n}\* \sum_{k\in L^{\pi}\cap \frac{1}{N}\*\mathbb{Z}^{m\*(n+1)}} \* \hat{f}(k^{(1)}/N)\cdots \hat{f}(k^{(m)}/N) \*\prod_{B\in \pi} c_{|B|}(k_i/N: i \in B).\nonumber
\end{align}

The crucial question in the power counting analysis of the r.h.s. of (\ref{momentmmm})  is the dimension of the vector subspace $L_{\pi}\subset \R^{m\*(n+1)},$ or, equivalently,  the rank of the system of linear equations  (\ref{lineq3}).

Denote
\begin{align}
[1]:=\{1,\ldots, n+1\}, \ [2]:=\{n+2, \ldots, 2n+2\}, \ldots, [m]=\{(m-1)\*(n+1)+1, \ldots, m\*(n+1)\}.
\nonumber
\end{align}
\begin{definition}
For a given partition $\pi$ of $\{1, \ldots, m\*n\}$ an equivalence relation $\sim_{\pi}$ is defined 
on the set $\{[1], [2], \ldots, [m]\}$ in the following way:
\begin{align}
\label{eqrel}
[i]\sim_{\pi} [j]    
\end{align}
if and only if there is a block $B$ in the partition $\pi$ such that $B\cap[i]\neq \emptyset$ and
$B\cap[j]\neq \emptyset.$

\end{definition}
\begin{remark}
It follows from Lemma 4.1 that the cardinality of each equivalence class of the equivalence relation $\sim_{\pi}$ is at least 2.
\end{remark}
\begin{definition}
We call a partition $\pi$
optimal if the cardinality of every equivalence class of the equivalence relation $\sim_{\pi}$ is 2. 
If $\pi$ is not optimal, it is called sub-optimal.
\end{definition}
Clearly, optimal partitions exist if and only if $m$ is even.
Next lemma is the main ingredient of the proof of Theorem 1.3.

\begin{lemma}
(i)
Let $m$ be an even positive integer and $\pi$ be an optimal partition of $\{1, \ldots, m\*n\}$. Then a linear subspace $L_{\pi} \subset \R^{m\*(n+1)}$ defined in (\ref{lb3}) satisfies
\begin{align}
\label{dimdim}
\dim L_{\pi}= m\*n +m/2 -|\pi|.
\end{align}

(ii) Let $m>1$ be a positive integer and $\pi$ be an sub-optimal partition of $\{1, \ldots, m\*n\}$.
Then
\begin{align}
\label{dimdimdim}
\dim L_{\pi}<m\*n +m/2 -|\pi|.
\end{align}
\end{lemma}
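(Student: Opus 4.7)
The plan is purely linear-algebraic: compute the rank of the defining system (\ref{lineq3}), which cuts out $L_\pi$ inside $\R^{m\*(n+1)}$. The system consists of two types of equations — $m$ ``super-block'' equations $E_{[i]}:\ \sum_{j\in [i]} k_j = 0$, one for each $[i]$ in the notation introduced just before Definition 4.2, and $|\pi|$ ``block'' equations $E_B:\ \sum_{j\in B} k_j = 0$ for $B\in\pi$ — giving $m+|\pi|$ equations in total. Since $\dim L_\pi = m\*(n+1) - \operatorname{rank}$, the whole problem reduces to counting linear dependencies among the family $\{E_{[i]}\}\cup\{E_B\}$.

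The key step I would carry out first is to associate one linear dependency with each equivalence class of $\sim_\pi$. Fix such a class $C\subseteq\{[1],\ldots,[m]\}$ and set $U_C:=\bigcup_{[i]\in C}[i]$. The critical observation is that every block $B\in\pi$ which meets $U_C$ is in fact \emph{contained} in $U_C$: for if $B$ met both $[i]\in C$ and $[j]\notin C$, then by the definition of $\sim_\pi$ we would have $[i]\sim_\pi [j]$, forcing $[j]\in C$, a contradiction. Therefore $U_C$ decomposes simultaneously as the disjoint union of the super-blocks in $C$ and as the disjoint union of those $\pi$-blocks contained in it, yielding the dependency
\[
\sum_{[i]\in C} E_{[i]} \;=\; \sum_{B\subseteq U_C} E_B.
\]

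Next I would verify that these are all of the dependencies. Any relation $\sum_i a_{[i]} E_{[i]} + \sum_B b_B E_B = 0$, read off at a single coordinate $k_j$ with $j\in [i]\cap B$, forces $a_{[i]}=-b_B$. Hence the coefficient $a_{[i]}$ depends only on the $\sim_\pi$-class of $[i]$, the $b_B$'s are then determined by $a$, and the space of dependencies has dimension exactly $k$, where $k$ is the number of $\sim_\pi$-equivalence classes. This gives
\[
\dim L_\pi \;=\; m\*(n+1) - (m+|\pi|-k) \;=\; m\*n - |\pi| + k.
\]

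The conclusion is then immediate from Remark 4.3: every class has cardinality at least $2$, so $k\leq m/2$, with equality if and only if every class has cardinality exactly $2$ — that is, if and only if $\pi$ is optimal. This yields $\dim L_\pi = m\*n + m/2 - |\pi|$ in the optimal case (proving (i)) and the strict inequality $\dim L_\pi < m\*n + m/2 - |\pi|$ in the sub-optimal case (proving (ii)). I do not anticipate a serious obstacle: the only delicate point is the ``blocks that meet $U_C$ are contained in $U_C$'' observation, which is what transfers the combinatorics of $\sim_\pi$ into exact control of the rank.
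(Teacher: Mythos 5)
Your proof is correct, and it gets to the conclusion by a genuinely different organization of the linear algebra than the paper. The paper treats the two parts separately: for (i) it decomposes an optimal $\pi$ into $m/2$ partitions $\pi_{i,j}$ of the pairs $[i]\cup[j]$, writes $L_{\pi}$ as the Cartesian product of the corresponding subspaces, and imports the count $\dim L_{\pi_{i,j}}=2n+1-|\pi_{i,j}|$ from the variance computation of Section 3; for (ii) it again factors over equivalence classes, reduces to the case of a single class, and proves by induction (Lemma 4.6) that the $|\pi|+m-1$ constraint vectors $\{\chi_B,\ B\in\pi\}\cup\{\chi_{[i]},\ 0\le i<m-1\}$ are linearly independent, i.e.\ it establishes a lower bound on the rank that suffices for the strict inequality. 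You instead compute the rank of the full system once and for all by determining the dependency space exactly: the observation that a block meeting $U_C$ must be contained in $U_C$ produces one dependency per equivalence class, and the coordinate-wise relation $a_{[i]}=-b_B$ (valid because each coordinate lies in exactly one $[i]$ and one $B$) shows, via transitivity of $\sim_{\pi}$, that these exhaust all dependencies. This yields the clean uniform formula $\dim L_{\pi}=mn-|\pi|+k$ with $k$ the number of classes, from which (i) and (ii) both drop out of $k\le m/2$ with equality exactly in the optimal case. Your route is more unified and gives an exact dimension for every admissible $\pi$; the paper's factorization has the separate virtue of making explicit the link to the rank-$(|\pi|+1)$ computation (\ref{dimll}) for the variance, which is what is reused when optimal partitions are matched to products of pair covariances at the end of the proof of the main theorem. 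The one hypothesis you should keep visible is that $k\le m/2$ rests on Remark 4.3 (every class has cardinality at least $2$), which holds only because $\pi$ ranges over partitions satisfying the condition of Lemma 4.1; you do cite the remark, so the argument is complete.
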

\begin{proof}

(i)  If $\pi$ is optimal it can be viewed as a union of $\frac{m}{2}$ partitions $\pi_{i,j}$ of $[i]\cup[j], \  [i]\sim_{\pi}[j].$ Each partition $\pi_{i,j}$ corresponds to a vector subspace $L_{\pi_{i,j}}\subset \R^{2n+2}$ of dimension 
\[
\dim L_{\pi_{i,j}}=2\*n+1-|\pi_{i,j}|
\]
(see (\ref{lb2}) and (\ref{dimll}).) Since $L_{\pi}$ is the Cartesian product of 
$L_{\pi_{i,j}}$'s  we have
\begin{align}
\dim  L_{\pi}   =\sum \dim L_{\pi_{i,j}},
\end{align}
and (\ref{dimdim}) immediately follows.

(ii) Now let us assume that $\pi$ is a sub-optimal partition. Recall that the subspace $L_{\pi}$ 
is determined by the following system of linear equations
\begin{align}
\label{lineq222}
\begin{cases}
& \sum_{j=1}^{n+1} t_{i\*(n+1)+j}=0,  \ \ \forall \ 0\leq i\leq m-1,\\
& \sum_{j\in B} t_j=0, \ \ \forall B\in \pi.
\end{cases}
\end{align}
To prove (\ref{dimdimdim}) we have to show that the rank of this system is bigger than $|\pi|+\frac{m}{2}.\ $ As before, $L_{\pi}$ can be viewed as the Cartesian product of the subspaces corresponding to the equivalence classes. Using the additivity of dimension, we can assume without a loss of generality that $\pi$ in (\ref{lineq222}) has only one equivalence class. We claim that in this case the rank of (\ref{lineq222}) is $|\pi|+m-1>|\pi|+\frac{m}{2},$ since $m>2.$  To show this, consider $|\pi|+m-1$ vectors in $\R^{m\*(n+1)},\ $ namely $\{ \chi_B, \ B\in \pi\} \cup \{ \chi_{[i]}, \ 0\leq i<m-1\}:$
\begin{align}
 & \chi_B(j)=\begin{cases}
 & 1 \ \ j\in B, \\
 & 0 \ \ j \notin B,
 \end{cases}    \\
 & \chi_{[i]}(j)=\begin{cases}
 & 1 \ \ j\in [i]=\{i\*(n+1) +1, \ldots, (i+1)\*(n+1)\}, \\
 & 0 \ \ j \notin [i].
 \end{cases}    
\end{align} 
Part (ii) of Lemma 4.5 follows from
\begin{lemma}
Let $\pi$ be a sub-optimal partition of $\{1, \ldots, m\*n\},  \ \ m\geq 1,  \ $ such that the equivalence relation $\sim_{\pi}$ 
on $\{[1], \ldots, [m]\}$ has only one equivalence class. Then
the vectors 
\begin{align}
\label{vectors}
\{ \chi_B, \ B\in \pi\} \cup \{ \chi_{[i]}, \ 0\leq i<m-1\}
\end{align}
are linearly independent.
\end{lemma}
\begin{proof}
The vectors $\{ \chi_B, \ B\in \pi\}$ are linearly independent  since their supports are disjoint. We show by induction that adding
$\chi_{[i]}$ one by one preserves linear independence.  Suppose that 
$\{ \chi_B, \ B\in \pi\} \cup \{ \chi_{[i]}, \ 0\leq i<k-1\}, \ k<m, \ $ are linearly independent  and $\chi_{[k]}$ can be written as a linear combination of $\{ \chi_B, \ B\in \pi\} \cup \{ \chi_{[i]}, \ 0\leq i<k-1\}.$ Then a non-trivial linear combination of $\{ \chi_{[i]}, \ 0\leq i<k\} \ $ can be written as a linear combination of some of the vectors $\chi_B.$ This implies that $[1]\cup \ldots \cup[k]$
contain an equivalence class of the equivalence relation $\sim_{\pi}, \ $ which is a contradiction.
Lemma 4.6 is proven.
\end{proof}
Since $L_{\pi}$ is the orthogonal complement of the linear span of the vectors (\ref{vectors})
this implies part (ii) of Lemma 4.5.
\end{proof}
Now we are ready to finish the proof of Theorem 2.5. Let $m=2\*k$ be even. 
Denote a subsum in (\ref{momentmmm}) corresponding to a partition $\pi$ by $\Sigma_{\pi}.$
It follows from Lemma 4.5 that only the optimal partitions $\pi$ give leading contribution of order $N^{m/2}$ to the r.h.s. of (\ref{momentmmm}). For each sub-optimal $\pi$ we have
$ \ \Sigma_{\pi}=  O(N^{\frac{m-1}{2}}). \ $

There are exactly 
$(2\*k-1)!!$ ways to split the set $\{[1], \ldots, [2\*k] \}$ into pairs. 
By repeating the variance computations, we note that the sum of  $\Sigma_{\pi}$'s  over all $\pi$ corresponding to any particular splitting of $\{[1], \ldots, [2\*k] \}$ into pairs gives 
$ \ \sigma^{m}(f)\*N^{\frac{m}{2}} \*(1+o(1)). \ $

We conclude that the $2\*k$-th moment of
$\frac{S_N(f)-\E S_N(f)}{\sqrt{\var S_N(f)}}$ converges to $(2k-1)!!$ in the limit $N\to \infty.$
The case of odd $m$ is treated similarly. Specifically, one obtains that the odd moments of
$\frac{S_N(f)-\E S_N(f)}{\sqrt{\var S_N(f)}}$ converges to $0$ in the limit $N\to \infty.$

This finishes the proof of Theorem 2.5.
\end{proof}

 \newpage

\end{document}